\begin{document}

\newtheorem{theorem}{Theorem}    
\newtheorem{proposition}[theorem]{Proposition}
\newtheorem{conjecture}[theorem]{Conjecture}
\def\theconjecture{\unskip}
\newtheorem{corollary}[theorem]{Corollary}
\newtheorem{lemma}[theorem]{Lemma}
\newtheorem{sublemma}[theorem]{Sublemma}
\newtheorem{fact}[theorem]{Fact}
\newtheorem{observation}[theorem]{Observation}
\theoremstyle{definition}
\newtheorem{definition}{Definition}
\newtheorem{definitions}{Definitions}
\def\thedefinitions{\unskip}
\newtheorem{notation}[definition]{Notation}
\newtheorem{remark}[definition]{Remark}
\newtheorem{question}[definition]{Question}
\newtheorem{questions}[definition]{Questions}
\newtheorem{example}[definition]{Example}
\newtheorem{problem}[definition]{Problem}
\newtheorem{exercise}[definition]{Exercise}

\numberwithin{theorem}{section}
\numberwithin{definition}{section}
\numberwithin{equation}{section}

\def\reals{{\mathbb R}}
\def\torus{{\mathbb T}}
\def\heis{{\mathbb W}}
\def\integers{{\mathbb Z}}
\def\rationals{{\mathbb Q}}
\def\naturals{{\mathbb N}}
\def\complex{{\mathbb C}\/}
\def\distance{\operatorname{distance}\,}
\def\support{\operatorname{support}\,}
\def\dist{\operatorname{dist}\,}
\def\Span{\operatorname{span}\,}
\def\degree{\operatorname{degree}\,}
\def\kernel{\operatorname{kernel}\,}
\def\dim{\operatorname{dim}\,}
\def\codim{\operatorname{codim}}
\def\trace{\operatorname{trace\,}}
\def\dimension{\operatorname{dimension}\,}
\def\codimension{\operatorname{codimension}\,}
\def\nullspace{\scriptk}
\def\kernel{\operatorname{Ker}}
\def\ZZ{ {\mathbb Z} }
\def\p{\partial}
\def\rp{{ ^{-1} }}
\def\Re{\operatorname{Re\,} }
\def\Im{\operatorname{Im\,} }
\def\ov{\overline}
\def\eps{\varepsilon}
\def\lt{L^2}
\def\diver{\operatorname{div}}
\def\curl{\operatorname{curl}}
\def\etta{\eta}
\newcommand{\norm}[1]{ \|  #1 \|}
\def\expect{\mathbb E}
\def\bull{$\bullet$\ }
\def\det{\operatorname{det}}
\def\Det{\operatorname{Det}}
\def\multiR{\mathbf R}
\def\bestA{\mathbf A}
\def\Apq{\mathbf A_{p,q}}
\def\Apqr{\mathbf A_{p,q,r}}
\def\diameter{\operatorname{diameter}}
\def\bp{\mathbf p}
\def\bff{\mathbf f}
\def\bg{\mathbf g}
\def\essd{\operatorname{essential\ diameter}}

\def\mab{\max(|A|,|B|)}
\def\t2{\tfrac12}
\def\tatb{tA+(1-t)B}

\newcommand{\abr}[1]{ \langle  #1 \rangle}

\newcommand{\Norm}[1]{ \Big\|  #1 \Big\| }
\newcommand{\set}[1]{ \left\{ #1 \right\} }
\def\one{{\mathbf 1}}
\def\zero{{\mathbf 0}}
\newcommand{\modulo}[2]{[#1]_{#2}}

\def\indexset{{\mathbb J}}

\def\scriptf{{\mathcal F}}
\def\scripts{{\mathcal S}}
\def\scriptq{{\mathcal Q}}
\def\scriptv{{\mathcal V}}
\def\scriptg{{\mathcal G}}
\def\scriptm{{\mathcal M}}
\def\scriptb{{\mathcal B}}
\def\scriptc{{\mathcal C}}
\def\scriptt{{\mathcal T}}
\def\scripti{{\mathcal I}}
\def\scripte{{\mathcal E}}
\def\scriptv{{\mathcal V}}
\def\scriptw{{\mathcal W}}
\def\scriptu{{\mathcal U}}
\def\scriptS{{\mathcal S}}
\def\scripta{{\mathcal A}}
\def\scriptr{{\mathcal R}}
\def\scripto{{\mathcal O}}
\def\scripth{{\mathcal W}}
\def\scriptd{{\mathcal D}}
\def\scriptl{{\mathcal L}}
\def\scriptn{{\mathcal N}}
\def\PP{{\mathcal P}}
\def\scriptk{{\mathcal K}}
\def\scriptp{{\mathcal P}}
\def\scriptj{{\mathcal J}}
\def\scriptz{{\mathcal Z}}
\def\frakv{{\mathfrak V}}
\def\frakV{{\mathfrak V}}
\def\frakA{{\mathfrak A}}
\def\frakB{{\mathfrak B}}
\def\frakC{{\mathfrak C}}

\def\subb{\le} 
\def\Rank{\operatorname{rank}} 
\def\field{\mathbb F}
\def\Abest{{\mathbb A}}
\def\Bbest{{\mathbb B}}
\def\Abestest{{\mathbf A}}

\author{Michael Christ}
\address{
        Michael Christ\\
        Department of Mathematics\\
        University of California \\
        Berkeley, CA 94720-3840, USA}
\email{mchrist@berkeley.edu}
\thanks{Research supported in part by NSF grant DMS-0901569.}

\date{July 30, 2013.}

\title[Optimal Constants in H\"older-Brascamp-Lieb Inequalities] 
{The Optimal Constants \\ in H\"older-Brascamp-Lieb Inequalities  \\ For Discrete Abelian Groups}
\maketitle

\section{Introduction}

An Abelian group HBL datum is a tuple
\[\scriptg=(G,(G_j: 1\le j\le m),(\phi_j: 1\le j\le m))\]
where $m$ is a positive integer, 
$G$ and $G_j$ are Abelian groups, and $\phi_j: G\to G_j$ are group homomorphisms. 
A finitely generated Abelian group HBL datum $\scriptg$
is one for which all of the groups $G,G_j$ are finitely generated;
a finite Abelian group HBL datum is one for which all of these groups are finite.

In this paper we determine the optimal constants $A\in[0,\infty]$ for multilinear inequalities
\begin{equation} \label{maininequality} 
\sum_{x\in G} \prod_j f_j(\phi_j(x)) \le A\prod_j\norm{f_j}_{1/s_j} \end{equation}
associated to arbitrary Abelian group HBL data.
Here the exponents $s_j$ belong to $[0,1]$, and 
$f_j:G_j\to[0,\infty)$ are arbitrary nonnegative functions in $L^{1/s_j}(G_j)$.
The underlying measure on $G_j$ is counting measure.

There is a substantial literature concerning corresponding inequalities in the continuum setting,
with $G,G_j$ replaced by $\reals^d,\reals^{d_j}$, and using Lebesgue measure in place of counting measure
to define the $L^{1/s_j}$ norms. See for instance 
\cite{bartheicm} and its bibliography.  In that continuum setting, a necessary and sufficient
condition for there to exist a finite constant for which \eqref{maininequality} holds 
was established in \cite{BCCT08} and in \cite{BCCT10}, by two different arguments.
The case in which all $d_j$ equal $1$ was treated earlier \cite{carlenliebloss}.
Still earlier \cite{liebgaussian}, it was shown that the supremum over arbitrary nonnegative
functions with specified norms equals the supremum over the subclass of all real Gaussian functions with those norms,
whether or not this supremum is finite.

Inequalities for discrete Abelian groups were considered in \cite{BCCT10}, where a necessary
and sufficient condition was given for there to exist a finite constant for which 
\eqref{maininequality} holds. In \cite{CDKSY} an application of such inequalities to computer science
was developed, and it was shown that if $G$ is torsion-free, then the optimal constant in the inequality 
equals $1$ in all cases in which it is finite.  

For finite groups $G$, for every $s$, the inequality holds with some finite constant. The main thrust of this
paper is the determination of those constants. The results for finite groups and for torsion-free groups are then easily
combined to yield the general case.

\subsection{Notations}
The number of elements of a set $E$ is denoted by $|E|$.
All $L^p$ norms in this paper are defined with respect to counting measure; 
\begin{equation*} \norm{f}_{L^p(G)} = \big( \sum_{x\in G} |f(x)|^p\big)^{1/p}\end{equation*} 
for $p\in[1,\infty)$ while $\norm{f}_{L^\infty(G)} = \sup_{x\in G}|f(x)|$.
Thus all Abelian groups are implicitly regarded as being discrete. 

We will often simplify notation by denoting $\scriptg$ instead by $(\phi_j: G\to G_j: 1\le j\le m)$ or, 
more simply, $(\phi_j:G\to G_j)$.  
$\zero$ will denote the group $\set{0}$, which will be considered to be a subgroup of all other groups.
The notation $H\le G$ signifies that $H\subset G$ is a subgroup, while $H<G$ means that $H$ is a proper subgroup.

\subsection{Key definitions}
\begin{definition}
Let $\scriptg=(\phi_j: G\to G_j)$ be an Abelian group HBL datum.
Let $s\in[0,1]^m$.
For any finite subgroup $H\le G$, 
\begin{align} A(H,s) = |H|\prod_j |\phi_j(H)|^{-s_j}.  \end{align}
\end{definition}

\begin{definition}
Let $\scriptg=(\phi_j:G\to G_j)$ be any Abelian group HBL datum.  For $s\in[0,1]^m$, 
\begin{equation} \Abest(\scriptg,s) = \sup_{H\le G} A(H,s) \end{equation} 
where the supremum is taken over all finite subgroups $H$ of $G$.
\end{definition}


\begin{definition}
For any Abelian group HBL datum $\scriptg$  and any $s\in[0,1]^m$,
$\Bbest(\scriptg,s)\in[0,\infty]$ is the infimum of the set of all $C\le\infty$ such that
\begin{equation} \sum_{x\in G} \prod_j f_j(\phi_j(x)) \le \Bbest(\scriptg,s) \prod_j\norm{f_j}_{1/s_j} \end{equation}
for all nonnegative functions $f_j\in L^{1/s_j}(G_j)$.
\end{definition}

\begin{definition}
Let $\scriptg$ be an Abelian group HBL datum.  $\scriptp(\scriptg)$ is the set of all $s\in[0,1]^m$ such that
\begin{equation*} \Rank(H)\le \sum_{j=1}^m s_j\Rank(\phi_j(H)) \ \text{ for every subgroup $H\le G$ of finite rank.} 
\end{equation*} \end{definition}

$\scriptp(\scriptg)$ and $\Abest(\scriptg,s)$ measure complementary algebraic aspects of $\scriptg$.
These are the structural quantities in terms of which the optimal constants $\Bbest(\scriptg,s)$ of the
analytic inequalities are expressed.

\begin{definition}
If $\scriptg=(\phi_j:G\to G_j: 1\le j\le m)$ 
and $\scriptg'=(\phi'_j:G'\to G'_j: 1\le j\le m)$ are Abelian group HBL data,
$\scriptg'$ is a sub-datum of $\scriptg$ if $G'\le G$, $G'_j\le G_j$,
and $\phi'_j$ is the restriction of $\phi_j$ to $G'$.
\end{definition}

\subsection{Finite groups}

Let $\scriptg=(\phi_j: G\to G_j: 1\le j\le m)$ be a finite Abelian  group HBL datum.
For any exponents $s_j\in[0,1]$ there exists $A<\infty$, depending also on $\scriptg$, 
for which the inequality \eqref{maininequality} is valid. That is, $\Bbest(\scriptg,s)<\infty$
for all finite Abelian group data and all exponents.

\begin{theorem} \label{thm:finitegroups}
For all finite Abelian group data $\scriptg$ and all $s\in[0,1]^m$,
\begin{equation} \label{BleA} \Bbest(\scriptg,s)= \Abest(\scriptg,s).  \end{equation} \end{theorem}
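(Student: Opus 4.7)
I would prove the two inequalities $\Bbest(\scriptg,s)\ge\Abest(\scriptg,s)$ and $\Bbest(\scriptg,s)\le\Abest(\scriptg,s)$ separately. The lower bound is the easier direction: for each finite subgroup $H\le G$, testing the inequality with the indicator test functions $f_j=\mathbf{1}_{\phi_j(H)}$ gives $\norm{f_j}_{1/s_j}=|\phi_j(H)|^{s_j}$, while the LHS is at least $|H|$ since for every $x\in H$ the product $\prod_j\mathbf{1}_{\phi_j(H)}(\phi_j(x))$ equals $1$. This forces $\Bbest\ge A(H,s)$, and taking the supremum over $H$ yields $\Bbest\ge\Abest$.

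For the reverse inequality I would induct on $|G|$. The base case $|G|=1$ is immediate. For $G=\mathbb{Z}/p$ cyclic of prime order, every $\phi_j$ is either zero or injective; writing $J=\{j:\phi_j\ne 0\}$ and $\sigma=\sum_{j\in J}s_j$, one has $\Abest=\max(1,p^{1-\sigma})$, and after factoring out the constants $f_j(0)\le\norm{f_j}_{1/s_j}$ for $j\notin J$, the remaining inequality follows from H\"older on $G$ (including, when $\sigma<1$, an auxiliary factor of $\mathbf{1}_G$ with exponent $1/(1-\sigma)$ so that all reciprocal exponents sum to $1$) together with the bound $\norm{f_j\circ\phi_j}_{1/s_j}\le\norm{f_j}_{1/s_j}$ coming from injectivity of $\phi_j$ on $G$. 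For $|G|$ composite, I would fix a proper nontrivial subgroup $K\le G$, split $G=\bigsqcup_q(x_q+K)$ over coset representatives $x_q$, and apply HBL twice. On the inner sum, the inductive hypothesis applied to the sub-datum $\scriptg_K=(\phi_j|_K\colon K\to G_j)$ gives, for each coset, $\sum_{k\in K}\prod_j f_j(\phi_j(x_q+k))\le\Abest(\scriptg_K)\prod_j F_j(\bar\phi_j(q))$, where $F_j\colon G_j/\phi_j(K)\to[0,\infty)$ is defined by $F_j(\bar y)^{1/s_j}=\sum_{y\in\bar y}f_j(y)^{1/s_j}$. On the outer sum, the inductive hypothesis applied to the quotient datum $\bar\scriptg=(\bar\phi_j\colon G/K\to G_j/\phi_j(K))$ gives $\sum_q\prod_j F_j(\bar\phi_j(q))\le\Abest(\bar\scriptg)\prod_j\norm{F_j}_{1/s_j}$. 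Since $\norm{F_j}_{1/s_j}=\norm{f_j}_{1/s_j}$ by construction, combining the two bounds yields $\Bbest(\scriptg,s)\le\Abest(\scriptg_K)\Abest(\bar\scriptg)$, and the induction closes provided $K$ can be chosen so that $\Abest(\scriptg_K)\Abest(\bar\scriptg)\le\Abest(\scriptg)$.

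The algebraic ingredient that facilitates the choice of $K$ is the supermodularity of $A$ on the subgroup lattice, $A(H_1)A(H_2)\le A(H_1\cap H_2)A(H_1+H_2)$, which follows from $|H_1||H_2|=|H_1\cap H_2||H_1+H_2|$ together with $|\phi_j(H_1\cap H_2)||\phi_j(H_1+H_2)|\le|\phi_j(H_1)||\phi_j(H_2)|$ (using $\phi_j(H_1+H_2)=\phi_j(H_1)+\phi_j(H_2)$ and $\phi_j(H_1\cap H_2)\le\phi_j(H_1)\cap\phi_j(H_2)$). Supermodularity implies that the maximizers of $A$ form a sublattice of the subgroup lattice; whenever this sublattice contains an element $K$ with $\zero<K<G$, one has $\Abest(\scriptg_K)=A(K)$ and $\Abest(\bar\scriptg)=1$, so that $\Abest(\scriptg_K)\Abest(\bar\scriptg)=\Abest(\scriptg)$. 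The main obstacle I anticipate is the edge case in which the only maximizers of $A$ are $\zero$ and $G$ themselves --- for instance the ``Loomis--Whitney''-type configuration $G=(\mathbb{Z}/p)^2$ with $\phi_1(x,y)=x$, $\phi_2(x,y)=y$, $\phi_3(x,y)=x+y$ and $s_j=2/3$, where $A(\zero)=A(G)=1$ but $A(K)<1$ for every proper nontrivial $K$, so the naive decomposition bound $\Abest(\scriptg_K)\Abest(\bar\scriptg)=1/A(K)$ strictly exceeds $\Abest(\scriptg)=1$. Handling this case will require supplementing the induction with a direct discrete Young/convolution-type estimate on $G$, or exploiting the exact multiplicativity of $\Abest$ under tensor products $\scriptg\otimes\scriptg'$ in combination with the sharp Young inequality on cyclic factors.
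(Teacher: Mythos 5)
Your lower bound (testing with indicators of $\phi_j(H)$), your factorization of $\Bbest$ through a subgroup $K$ via the fiberwise functions $F_j$, and the identity $A(H')=A(\Gamma)A(K)$ for lifts of subgroups of $G/K$ all coincide with the paper's preliminary lemmas and with Lemmas~\ref{lemma:Bfactorize} and~\ref{lemma:Afactorize}. Your supermodularity inequality $A(H_1)A(H_2)\le A(H_1\cap H_2)A(H_1+H_2)$ is correct and is a clean substitute for the paper's device (Lemma~\ref{lemma:secondsplitting}) of taking a minimal element of the set of maximizing proper subgroups; the base case $G=\integers/p$ via H\"older is also fine. But the argument has a genuine gap exactly where you flag it: the case in which no proper nontrivial subgroup closes the induction. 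This is not a removable edge case --- it is the heart of the theorem --- and neither of your two suggested remedies is developed or obviously workable. A direct discrete Young estimate does dispose of your specific $(\integers/p)^2$ example (it is literally Young's inequality), but the class of data for which $\zero$ and $G$ are the only subgroups with $A(H,s)\ge 1$ is far broader than rank-one maps on $(\integers/p)^2$; and multiplicativity of $\Abest$ under tensor products does not by itself reduce such data to cyclic factors.

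The paper closes this gap with a mechanism absent from your proposal: convexity and interpolation in the exponent $s$. For fixed $\Abestest$ the set $\scriptp_{\Abestest}=\set{s\in[0,1]^m:\Abest(\scriptg,s)\le\Abestest}$ is a convex polytope cut out by the linear constraints $\ln|H|-\sum_j s_j\ln|\phi_j(H)|\le\ln\Abestest$, and by multilinear Riesz--Thorin interpolation it suffices to bound $\Bbest(\scriptg,t)$ at the extreme points $t$ of this polytope. At an extreme point where no constraint from a proper nontrivial subgroup is active, only the single constraint coming from $G$ itself can be binding near $t$, which forces $t_j\in\set{0,1}$ for all but at most one index $j$ (Lemma~\ref{lemma:extremepts}). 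That degenerate exponent configuration is then handled directly (Lemma~\ref{lemma:icase2}) by factoring through $\bigcap_{k\ne i,\,t_k=1}\kernel(\phi_k)$ and invoking the one-exponent computation of Lemma~\ref{lemma:icase}; a separate scaling argument in $\theta\mapsto\Abest(\scriptg,\theta s)$ reduces the case where $A(H,s)<1$ for every nonzero $H$ to the above. Without the interpolation step (or some equivalent), your induction cannot terminate, so as written the proof is incomplete.
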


That is, the optimal constant in the associated inequality \eqref{maininequality} is $\Abest(\scriptg,s)$.

\subsection{General groups}

\begin{theorem}\label{thm:generalgroups}
For any Abelian group HBL datum
$\scriptg=(\phi_j: G\to G_j: 1\le j\le m)$ and any $s\in\scriptp(\scriptg)$,
\begin{equation} \label{generalgroupsmaininequality}
\Bbest(\scriptg,s)= \Abest(\scriptg,s).  \end{equation} 
Conversely, if $s\in[0,1]^m$ and if there exists $C<\infty$ such that
\begin{equation} \label{setcase} |E|\le C\prod_j |\phi_j(E)|^{s_j} \end{equation}
for all nonempty finite subsets $E\subset G$, then $s\in\scriptp(\scriptg)$ and $C\ge\Abest(\scriptg,s)$.
\end{theorem}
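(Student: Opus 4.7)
The plan is to handle the lower bounds first and reserve the bulk of the work for the upper bound $\Bbest(\scriptg,s) \le \Abest(\scriptg,s)$. For each finite subgroup $H \le G$, testing \eqref{maininequality} with $f_j = \one_{\phi_j(H)}$ gives $\sum_{x\in G} \prod_j f_j(\phi_j(x)) \ge |H|$ while $\norm{f_j}_{1/s_j} = |\phi_j(H)|^{s_j}$; this shows $A(H,s) \le \Bbest(\scriptg,s)$, and taking the supremum over $H$ yields $\Abest \le \Bbest$. The same test with $E = H$ in \eqref{setcase} gives $C \ge \Abest(\scriptg,s)$. To extract $s \in \scriptp(\scriptg)$ from \eqref{setcase}, I fix a subgroup $H \le G$ of finite rank $r$, pick $e_1,\dots,e_r\in H$ lifting a basis of $H/H_{\mathrm{tors}}$, and apply \eqref{setcase} to the boxes $E_N = \set{\sum_{i=1}^r n_i e_i + t:|n_i|\le N,\ t\in H_{\mathrm{tors}}}$. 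Since $|E_N|\asymp N^r$ and $|\phi_j(E_N)| = O(N^{\Rank(\phi_j(H))})$, letting $N\to\infty$ and comparing polynomial growth rates forces $r \le \sum_j s_j \Rank(\phi_j(H))$.

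The remaining task, $\Bbest(\scriptg,s) \le \Abest(\scriptg,s)$ for $s \in \scriptp(\scriptg)$, I would attack by induction on the rank of $G$ after first reducing to $G$ finitely generated. For the reduction: if $\Bbest(\scriptg,s)=\infty$ there is nothing to prove, since then some finite subgroup of $\bigcap_j \ker(\phi_j)$ has arbitrarily large $A(H,s)$ and $\Abest$ is likewise infinite; if $\Bbest(\scriptg,s)<\infty$, then by monotone convergence it suffices to prove the inequality for finitely supported $f_j$, in which case the nonzero contributions to the left side lie in a finitely generated subgroup $G' \le G$ on which $\Abest(\scriptg',s) \le \Abest(\scriptg,s)$. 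For finitely generated $G$, the base case $\Rank(G) = 0$ means $G$ is finite and is exactly Theorem \ref{thm:finitegroups}.

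For the inductive step, the plan is to exploit the defining inequality of $\scriptp(\scriptg)$. Either there is a proper subgroup $H < G$ of positive rank saturating $\Rank(H) = \sum_j s_j \Rank(\phi_j(H))$, in which case I would split the sum over $G$ as an iterated sum over cosets of $H$ and over $H$ itself, apply Hölder on each coset, and invoke the inductive hypothesis on the sub-datum over $H$ and on the quotient datum on $G/H$ (both of strictly smaller rank) to conclude; or no such proper critical subgroup exists, in which case the rank constraint is strict on every positive-rank proper subgroup, allowing me to pass to the torsion-free quotient $G/G_{\mathrm{tors}}$ (where the result of \cite{CDKSY} gives $\Bbest = 1$) and combine with Theorem \ref{thm:finitegroups} applied to $G_{\mathrm{tors}}$, with the torsion contribution absorbed into $\Abest(\scriptg,s)$. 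The principal obstacle I anticipate is verifying the multiplicativity of $\Abest$ across the short exact sequence $0 \to H \to G \to G/H \to 0$: every finite subgroup $K \le G$ must be recognized as an extension of a finite subgroup of $G/H$ by one of $H$, and the quantity $A(K,s)$ must factor compatibly as a product of the analogous quantities on the two ends, so that the product of the best constants on $H$ and on $G/H$ matches $\Abest(\scriptg,s)$. Granted this structural factorization, the induction closes and the upper bound follows.
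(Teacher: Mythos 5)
Your lower bounds and the converse half are sound, and your box-counting derivation of $s\in\scriptp(\scriptg)$ from \eqref{setcase} is a reasonable self-contained substitute for the paper's citation of \cite{CDKSY} (work with finitely generated subgroups so that $H_{\mathrm{tors}}$ is finite; the finite-rank case then follows by passing to a finitely generated subgroup of full rank). The forward direction, however, has two genuine gaps. In the reduction to finitely generated $G$, your disposal of the case $\Bbest(\scriptg,s)=\infty$ is invalid: $\Bbest=\infty$ does not imply that $\bigcap_j\kernel(\phi_j)$ contains large finite subgroups (take $G=\bigoplus_n \ZZ/2\ZZ$, $m=1$, $\phi_1=\mathrm{id}$, $s_1=1/2$: the common kernel is trivial, yet $\Abest=\Bbest=\infty$). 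The correct split is on $\Abest$: if $\Abest(\scriptg,s)=\infty$ then $\Bbest\ge\Abest$ already forces equality; if $\Abest(\scriptg,s)<\infty$ then Lemma~\ref{lemma:smallK} gives $|\bigcap_j\kernel(\phi_j)|<\infty$, which is exactly what you need --- and omit --- to conclude that finitely supported $f_j$ contribute only on a finite subset of $G$, hence on a finitely generated subgroup.

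In the inductive step, the branch with a proper subgroup $H<G$ of positive rank saturating $\Rank(H)=\sum_j s_j\Rank(\phi_j(H))$ is not established: the induction on $\Rank(G)$ need not decrease on the sub-datum (e.g.\ $H=2\ZZ<\ZZ$ has full rank), and the multiplicativity of $\Abest$ across $0\to H\to G\to G/H\to 0$, which you correctly flag as the principal obstacle, is not supplied by rank-criticality; the usable factorization (Lemma~\ref{lemma:Afactorize}) requires instead that $A(H,s)=\Abest$ over \emph{finite} subgroups, and finite subgroups of $G/H$ need not lift to finite subgroups of $G$. Fortunately that branch is unnecessary: your second branch, run unconditionally, is the paper's proof and requires no dichotomy and no induction on rank. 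Take $T=G_{\mathrm{tors}}$, finite since $G$ is finitely generated; Lemma~\ref{lemma:Bfactorize} needs no criticality hypothesis and gives $\Bbest(\scriptg,s)\le\Bbest(\scriptt,s)\,\Bbest(\scriptg/\scriptt,s)$; Theorem~\ref{thm:finitegroups} gives $\Bbest(\scriptt,s)=\Abest(\scriptt,s)$; the result of \cite{CDKSY} gives $\Bbest(\scriptg/\scriptt,s)=1$ since $G/T$ is torsion-free and $s\in\scriptp(\scriptg/\scriptt)$; and $\Abest(\scriptt,s)=\Abest(\scriptg,s)$ simply because every finite subgroup of $G$ lies in $T$. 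Thus no multiplicativity of $\Abest$ across the extension is needed anywhere in this theorem; it is used only inside the proof of Theorem~\ref{thm:finitegroups}, which you are taking as given.
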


For general data $\scriptg$, $\Abest(\scriptg,s)$ can be infinite. 
If so, then \eqref{generalgroupsmaininequality} is not valid with any finite constant, even if $s\in\scriptp(\scriptg)$. 
If there exists a finite subgroup $H\le G$ which satisfies $A(H,s)=\Abest(\scriptg,s)$, 
then equality is realized in \eqref{setcase} by $E=H$,
and equality is realized in \eqref{generalgroupsmaininequality} with 
each function $f_j$ equal to the indicator function of $\phi_j(H)$.

This result synthesizes Theorem~\ref{thm:finitegroups} with the result for torsion-free finitely generated
$G$ obtained in \cite{CDKSY}, extends this synthesis to groups which are not finitely generated.

\section{Preliminary facts}

Obviously $\Abest(\scriptg,s)\ge 1$ for all data and exponents, since $A(\zero,s)=1$.
If $|G|=1$ then $\Abest(s)=1$, and \eqref{generalgroupsmaininequality} certainly holds.

It is always the case that
\begin{equation*} \Bbest(\scriptg,s)\le |G|.\end{equation*}
Indeed, 
\begin{equation*}
\sum_{x\in G}\prod_j f_j(\phi_j(x)) \le |G|\prod_j\norm{f_j}_\infty
\le |G|\prod_j \norm{f_j}_{1/s_j}
\end{equation*}
for all nonnegative functions. 

\begin{lemma} Let $\scriptg$ be any Abelian group HBL datum and let $H\le G$ be any finite subgroup.
Then $\Bbest(\scriptg,s)\ge A(H,s)$.
\end{lemma}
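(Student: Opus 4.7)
The plan is to test the inequality defining $\Bbest(\scriptg,s)$ against a well-chosen tuple of indicator functions, namely $f_j = \one_{\phi_j(H)}$ on $G_j$. This is the natural choice: among all functions with prescribed support sizes, characteristic functions tend to saturate multilinear inequalities of this type, and the finiteness of $H$ is precisely what makes these functions lie in the required $L^{1/s_j}$ spaces.

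First I would compute the right-hand side. For each $j$ with $s_j>0$, $\norm{\one_{\phi_j(H)}}_{1/s_j} = |\phi_j(H)|^{s_j}$, and for $s_j=0$ the same formula holds with the convention $|\phi_j(H)|^0=1=\norm{\one_{\phi_j(H)}}_\infty$. Hence $\prod_j\norm{f_j}_{1/s_j}=\prod_j|\phi_j(H)|^{s_j}$.

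Next I would bound the left-hand side from below. Since each $\phi_j$ is a homomorphism, for every $x\in H$ one has $\phi_j(x)\in\phi_j(H)$, so $\prod_j f_j(\phi_j(x))=1$ for all such $x$. Therefore
\begin{equation*}
\sum_{x\in G}\prod_j f_j(\phi_j(x)) \ge \sum_{x\in H} 1 = |H|.
\end{equation*}
(The sum could be strictly larger, coming from cosets $x\notin H$ with $\phi_j(x)\in\phi_j(H)$ for all $j$, but this inequality already suffices.)

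Combining the two steps, the definition of $\Bbest(\scriptg,s)$ forces
\begin{equation*}
|H| \le \Bbest(\scriptg,s)\,\prod_j|\phi_j(H)|^{s_j},
\end{equation*}
which upon dividing yields $\Bbest(\scriptg,s)\ge |H|\prod_j|\phi_j(H)|^{-s_j}=A(H,s)$. There is no real obstacle here; the only minor subtlety is the convention on $L^{1/s_j}$ norms when some $s_j=0$, which is harmless since an indicator function has $L^\infty$ norm equal to $1$. This simultaneously shows that equality in \eqref{BleA} from Theorem~\ref{thm:finitegroups} can be approached (and is achieved, in the finite-group case, by $f_j=\one_{\phi_j(H)}$ when $H$ attains the supremum defining $\Abest$).
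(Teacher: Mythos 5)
Your proof is correct and is exactly the paper's argument: test the defining inequality on the indicator functions $f_j=\one_{\phi_j(H)}$, note the right side is $\prod_j|\phi_j(H)|^{s_j}$ and the left side is at least $|H|$, and divide. The paper leaves these computations implicit; you have simply written them out.
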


\begin{proof}
Let $H\le G$.  It suffices to apply the inequality to the functions 
\begin{equation*} f_j = \one_{\phi_j(H)} 
= \begin{cases} 1 &\text{if } x\in H \\ 0 &\text{if } x\notin H.\end{cases}
\end{equation*}
\end{proof}

Taking the supremum over all such subgroups $H$ gives
\begin{corollary}
For any Abelian group HBL datum $\scriptg$ and any $s\in[0,1]^m$,
\begin{equation} \Bbest(\scriptg,s)\ge \Abest(\scriptg,s).\end{equation}
\end{corollary}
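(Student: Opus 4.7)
The plan is to derive the corollary directly from the preceding lemma by passing to a supremum, which is the strategy signalled by the remark just before the statement. The lemma supplies the pointwise bound $\Bbest(\scriptg,s) \ge A(H,s)$ for every finite subgroup $H \le G$, and by definition $\Abest(\scriptg,s) = \sup_{H \le G} A(H,s)$, the supremum being taken over all finite subgroups. Since the left-hand side $\Bbest(\scriptg,s)$ does not depend on $H$, it is an upper bound for the family $\{A(H,s) : H \le G,\ |H|<\infty\}$, and hence it dominates the supremum of that family. This is precisely the stated inequality.

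The only case requiring a moment's thought is $\Abest(\scriptg,s) = +\infty$, but this presents no real obstacle: if the supremum is infinite, the definition of $\Abest$ furnishes finite subgroups $H_n \le G$ with $A(H_n,s) \to \infty$; the lemma then forces $\Bbest(\scriptg,s) \ge A(H_n,s)$ for every $n$, so $\Bbest(\scriptg,s) = \infty$ as well, and the stated inequality holds in $[0,\infty]$. All substantive content sits in the lemma, whose proof tested \eqref{maininequality} against the indicator functions $\one_{\phi_j(H)}$; the corollary is a formal consequence of that lemma together with the definition of $\Abest(\scriptg,s)$, so no additional analytic input or combinatorial construction is needed here.
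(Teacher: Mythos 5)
Your proof is correct and is exactly the paper's argument: the paper derives the corollary by taking the supremum over all finite subgroups $H$ of the preceding lemma's bound $\Bbest(\scriptg,s)\ge A(H,s)$. Your extra remark on the case $\Abest(\scriptg,s)=+\infty$ is harmless and consistent with the interpretation of the inequality in $[0,\infty]$.
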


\begin{lemma} \label{lemma:injectivecase}
Let $K$ be the intersection of the kernels of $\phi_k$ for all $k$ such that $s_k=1$.
If $K=\zero$ then 
\begin{equation*} \Bbest(\scriptg,s)=1. \end{equation*}
\end{lemma}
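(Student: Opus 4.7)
The plan is to establish the matching upper bound $\Bbest(\scriptg,s)\le 1$, since the lower bound $\Bbest(\scriptg,s)\ge \Abest(\scriptg,s)\ge A(\zero,s)=1$ has already been recorded. For this, let $J=\{k: s_k=1\}$, so by hypothesis the joint homomorphism $\Phi: G\to \prod_{k\in J}G_k$ defined by $\Phi(x)=(\phi_k(x))_{k\in J}$ is injective.

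The first step is to handle the indices $j\notin J$, where $s_j<1$, by the trivial pointwise estimate
\begin{equation*}
\prod_{j\notin J}f_j(\phi_j(x))\le \prod_{j\notin J}\norm{f_j}_\infty.
\end{equation*}
Since all $L^p$ norms here are with respect to counting measure on a discrete group, $\norm{f_j}_\infty\le \norm{f_j}_{1/s_j}$ whether $s_j\in(0,1)$ or $s_j=0$ (in the latter case this is an equality by definition). Inserting this estimate pointwise into $\sum_{x\in G}\prod_j f_j(\phi_j(x))$ and pulling the constant factor out leaves the quantity $\sum_{x\in G}\prod_{k\in J} f_k(\phi_k(x))$ still to be bounded.

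The second step exploits the injectivity of $\Phi$. As $x$ ranges over $G$, the tuples $(\phi_k(x))_{k\in J}$ range without repetition over a subset of $\prod_{k\in J}G_k$, so because each $f_k\ge 0$,
\begin{equation*}
\sum_{x\in G}\prod_{k\in J} f_k(\phi_k(x))
\le \sum_{(y_k)_{k\in J}\in\prod_{k\in J}G_k}\prod_{k\in J} f_k(y_k)
= \prod_{k\in J}\norm{f_k}_1 = \prod_{k\in J}\norm{f_k}_{1/s_k}.
\end{equation*}
Combining the two steps gives $\sum_x\prod_j f_j(\phi_j(x))\le \prod_j\norm{f_j}_{1/s_j}$, hence $\Bbest(\scriptg,s)\le 1$, completing the proof.

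There is no substantive obstacle: the argument is essentially a direct use of the hypothesis, plus the reverse monotonicity of $\ell^p$ norms with respect to counting measure. The only points requiring care are the degenerate edge cases ($J=\emptyset$, which forces $G=\zero$ and makes the inequality trivial; or some $s_j=0$, where one must interpret $\norm{f_j}_{1/s_j}$ as $\norm{f_j}_\infty$), both of which are absorbed into the estimate above with no change in the argument.
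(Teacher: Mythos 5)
Your proof is correct and follows essentially the same route as the paper's: bound the factors with $s_j\ne 1$ by $\norm{f_j}_\infty\le\norm{f_j}_{1/s_j}$ (valid for counting measure), then use injectivity of $x\mapsto(\phi_k(x))_{s_k=1}$ to dominate the remaining sum by $\prod_{k}\norm{f_k}_1$. Your explicit handling of the edge cases ($J=\emptyset$ and $s_j=0$) is a minor addition the paper leaves implicit.
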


In particular, in this circumstance $\Bbest(\scriptg,s)\le\Abest(\scriptg,s)$
since $\Abest(\scriptg,s)\ge A(\zero,s)= 1$ for arbitrary $\scriptg,s$. 

\begin{proof}
Since the mapping $G\owns x\mapsto (\phi_k(x): s_k=1)$ is injective,
for arbitrary functions $f_j:G_j\to[0,\infty)$,
\begin{equation*} \prod_{k:\,s_k=1} \norm{f_k}_{L^1(\phi_k(G)}
\le \sum_{x\in G} \prod_{k:\,s_k=1} f_k(\phi_k(x)).  \end{equation*}
Therefore
\begin{align*}
\sum_{x\in G} \prod_j f_j(\phi_j(x))
&\le
\prod_{j:\,\, s_j\ne 1} \norm{f_j}_\infty
\sum_{x\in G}\,\, \prod_{k:\,s_k=1} f_k(\phi_k(x))
\\
&\le
\prod_{j:\,\, s_j\ne 1} \norm{f_j}_\infty\,\,
\prod_{k:\,s_k=1} \norm{f_k}_{L^1(\phi_k(G))}
\\
&\le \prod_{n=1}^m \norm{f_n}_{1/s_n}.
\end{align*}
\end{proof}

\begin{lemma} \label{lemma:icase}
If there exists an index $i$ such that $s_j=0$ for all $j\ne i$ then
\begin{equation*} \Abest(\scriptg,s) = \Bbest(\scriptg,s)  =|G|^{1-s_i}|\kernel(\phi_i)|^{s_i} \end{equation*}
\end{lemma}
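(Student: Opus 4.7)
The plan is to sandwich $\Bbest(\scriptg,s)$ between a H\"older-type upper bound and the lower bound obtained by testing the inequality on the subgroup $H=G$ itself, and then to combine this with the already-established relation $\Abest\le\Bbest$ and with $\Abest\ge A(G,s)$ to collapse everything to the single value $|G|^{1-s_i}|\kernel(\phi_i)|^{s_i}$.

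For the upper bound, I would first note that $\norm{f_j}_{1/s_j}=\norm{f_j}_\infty$ for every $j\ne i$ (using the convention $1/0=\infty$), and bound $f_j(\phi_j(x))\le\norm{f_j}_\infty$ pointwise. This strips away the factors indexed by $j\ne i$, reducing the problem to an estimate of $\sum_{x\in G}f_i(\phi_i(x))$ by a multiple of $\norm{f_i}_{1/s_i}$. Writing $K=\kernel(\phi_i)$ and partitioning the sum by the fibers of $\phi_i$, each of cardinality $|K|$, one obtains $\sum_{x\in G}f_i(\phi_i(x))=|K|\sum_{y\in\phi_i(G)}f_i(y)$. H\"older's inequality on $\phi_i(G)$ with conjugate exponents $1/s_i$ and $1/(1-s_i)$, applied against $\one_{\phi_i(G)}$, yields $\sum_{y\in\phi_i(G)}f_i(y)\le|\phi_i(G)|^{1-s_i}\norm{f_i}_{1/s_i}$. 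Using the first isomorphism theorem in the form $|\phi_i(G)|=|G|/|K|$ then produces the upper bound $\Bbest(\scriptg,s)\le|G|^{1-s_i}|K|^{s_i}$.

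For the matching lower bound, the lemma appearing just before this one gives $\Bbest(\scriptg,s)\ge A(H,s)$ for every finite subgroup $H\le G$. Specializing to $H=G$ (viewed as a finite subgroup of itself) and observing that $|\phi_j(G)|^{-s_j}=1$ whenever $s_j=0$, I would compute
$$A(G,s)=|G|\,|\phi_i(G)|^{-s_i}=|G|\bigl(|G|/|K|\bigr)^{-s_i}=|G|^{1-s_i}|K|^{s_i}.$$
Combining with the preceding paragraph gives $\Bbest(\scriptg,s)=|G|^{1-s_i}|K|^{s_i}$. Since $\Abest(\scriptg,s)\ge A(G,s)$ directly from its definition and $\Abest(\scriptg,s)\le\Bbest(\scriptg,s)$ from the corollary stated earlier, both equalities in the statement follow at once.

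The argument is a routine two-sided sandwich and poses no serious obstacle; the only point worth noting is that the choice of test function $f_i=\one_{\phi_i(G)}$ saturates the H\"older step precisely at the extremal subgroup $H=G$, which is what forces the two sides of the sandwich to match.
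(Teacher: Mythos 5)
Your proof is correct and follows essentially the same route as the paper's: strip the $j\ne i$ factors via $L^\infty$, decompose the sum over the fibers of $\phi_i$ (cosets of $\kernel(\phi_i)$), and apply H\"older against $\one_{\phi_i(G)}$ to get $\Bbest\le|G|^{1-s_i}|\kernel(\phi_i)|^{s_i}=A(G,s)$. The only cosmetic difference is that the paper verifies $\Abest(\scriptg,s)=A(G,s)$ directly by checking $A(H,s)=|H|^{1-s_i}|H\cap\kernel(\phi_i)|^{s_i}\le A(G,s)$ for every subgroup $H$, whereas you obtain it as a byproduct of the sandwich $A(G,s)\le\Abest\le\Bbest\le A(G,s)$; both are valid.
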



\begin{proof}
Let $i$ be an index such that $s_j=0$ for all $j\ne i$.
Set $\scriptk_i=\kernel(\phi_i)$.
For any subgroup $H\le G$, 
\begin{align*}
A(H,s) = |H|\cdot|\phi_i(H)|^{-s_i}
= |H|\cdot (|H|/|H\cap\scriptk_i|)^{-s_i}
= |H|^{1-s_i}  |H\cap\scriptk_i|^{s_i}
\le  |G|^{1-s_i}  |\scriptk_i|^{s_i}.
\end{align*}
Thus in this situation,
\begin{equation} \label{identifyinicase}
\Abest(\scriptg,s) = A(G,s)=|G|^{1-s_i}|\scriptk_i|^{s_i}.
\end{equation}

To analyze $\Bbest(\scriptg,s)$,
let $Y$ be a set that contains exactly one element from each coset in $G/\scriptk_i$.
Then
\begin{align*}
\sum_{x\in G} \prod_j f_j(\phi_j(x))
\le \prod_{j\ne i}\norm{f_j}_\infty \sum_{x\in G} f_i(\phi(x)),
\end{align*}
and
\begin{align*}
\sum_{x\in G} f_i(\phi_i(x))
&= \sum_{y\in Y} \sum_{z\in\scriptk_i} f_i(\phi_i(z)+\phi_i(y)).
\\&= \sum_{y\in Y} \sum_{z\in\scriptk_i} f_i(\phi_i(y))
\\&= |\scriptk_i| \sum_{y\in Y}  f_i(\phi_i(y)).
\end{align*}
Since the images $\phi_i(y)$ are distinct for distinct values of $y$,
this implies that
\begin{align*}
\sum_{x\in G} f_i(\phi_i(x))
&\le |\scriptk_i| \norm{f_i}_1
\\&\le |\scriptk_i| \norm{f_i}_{1/s_i} |\phi_i(Y)|^{1-s_i}
\\&= |\scriptk_i| \norm{f_i}_{1/s_i} |\phi_i(G)|^{1-s_i}
\\&= |\scriptk_i| \norm{f_i}_{1/s_i} (|G|/|\scriptk_i|)^{1-s_i}
\\&= |\scriptk_i|^{s_i}|G|^{1-s_i} \norm{f_i}_{1/s_i}.
\\&= \Abest(\scriptg,s) \norm{f_i}_{1/s_i}
\end{align*}
using \eqref{identifyinicase}.
Thus $\Bbest(\scriptg,s)\le \Abest(\scriptg,s)$.
It has already been noted that the converse inequality holds, as a direct consequence of the definitions.
\end{proof}

\begin{lemma}\label{lemma:smallK}
For any Abelian group HBL datum $\scriptg$ and any $s\in[0,1]^m$,
\begin{equation} \big|\bigcap_{j=1}^m \kernel(\phi_j)\big|\le \Abest(\scriptg,s). \end{equation}
\end{lemma}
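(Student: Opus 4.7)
The plan is to evaluate $A(H,s)$ at one carefully chosen finite subgroup $H\le G$ and read off the inequality directly from the definition of $\Abest$. Set $K=\bigcap_{j=1}^m\kernel(\phi_j)$; the natural choice is $H=K$ when $K$ is finite (when $K$ is infinite, the same calculation applied to finite subgroups of $K$ of unbounded order will drive $\Abest(\scriptg,s)$ to infinity, matching the right-hand side).

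The key observation is that because every $x\in K$ satisfies $\phi_j(x)=0$ for all $j$, the image $\phi_j(K)=\{0\}$ is trivial, so $|\phi_j(K)|=1$ for each $j$. Substituting into the defining formula, all of the factors $|\phi_j(K)|^{-s_j}$ collapse to $1$, leaving
\[ A(K,s) \;=\; |K|\prod_{j=1}^m|\phi_j(K)|^{-s_j} \;=\; |K|\cdot\prod_{j=1}^m 1^{-s_j} \;=\; |K|. \]
Since $\Abest(\scriptg,s)$ is defined as the supremum of $A(H',s)$ over all finite subgroups $H'\le G$, this single evaluation already yields $\Abest(\scriptg,s)\ge A(K,s)=|K|$, which is the desired inequality.

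No substantial obstacle is anticipated. The lemma is essentially a one-line unpacking of the definition of $\Abest$, exploiting only the trivial structural fact that a subgroup contained in every $\kernel(\phi_j)$ is collapsed to $\{0\}$ by every $\phi_j$, so that $|\phi_j(H)|^{-s_j}$ contributes nothing to $A(H,s)$.
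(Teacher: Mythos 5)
Your argument is correct and is exactly the paper's one-line proof: evaluate $A(K,s)$ at $K=\bigcap_j\kernel(\phi_j)$, note that every factor $|\phi_j(K)|^{-s_j}$ equals $1$, and conclude $|K|=A(K,s)\le\Abest(\scriptg,s)$. One caveat on your parenthetical aside: an infinite $K$ need not contain finite subgroups of unbounded order (e.g.\ $K\cong\integers$ has only the trivial finite subgroup), so that remark is not justified as stated --- but the paper's own proof likewise treats only the case of finite $K$, which is the only case used later.
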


\begin{proof}
The subgroup $\scriptk=\cap_j \kernel(\phi_j)$ satisfies
\[ |\scriptk|= A(\scriptk,s)\prod_j |\phi_j(\scriptk)|^{s_j} = A(\scriptk,s)\prod_j 1 = A(\scriptk,s)\le\Abest(\scriptg,s).\]
\end{proof}

\begin{lemma}\label{lemma:subdata}
If $\tilde\scriptg$ is a sub-datum of $\scriptg$ and if $s\in[0,1]^m$ then $\Abest(\tilde\scriptg)\le\Abest(\scriptg)$.
\end{lemma}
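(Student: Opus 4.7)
The plan is to unfold the definitions and observe that every finite subgroup of $G'$ is automatically a finite subgroup of $G$, and that the quantity $A(H,s)$ depends only on $H$ and the restrictions $\phi_j|_H$, which are unchanged whether we view $H$ as sitting inside $G'$ or inside $G$.

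More concretely, I would start by fixing a finite subgroup $H \le G'$. Since $G' \le G$, $H$ is also a finite subgroup of $G$. Because $\tilde\phi_j = \phi_j|_{G'}$, for any $x \in H$ we have $\tilde\phi_j(x) = \phi_j(x)$, so the image $\tilde\phi_j(H)$ coincides with $\phi_j(H)$ as a subset of $G_j$ (the containment $\phi_j(H) \subseteq G'_j$ is guaranteed because $\tilde\phi_j$ has codomain $G'_j$, but the cardinality does not depend on this distinction). Consequently
\[
A(H,s) = |H|\prod_j |\tilde\phi_j(H)|^{-s_j} = |H|\prod_j |\phi_j(H)|^{-s_j},
\]
so the value of $A(H,s)$ is the same whether it is computed relative to $\tilde\scriptg$ or relative to $\scriptg$.

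Taking the supremum of this common quantity over all finite subgroups $H \le G'$ gives $\Abest(\tilde\scriptg,s)$, while the supremum over the (possibly strictly larger) collection of all finite subgroups of $G$ gives $\Abest(\scriptg,s)$. Since a supremum over a subcollection cannot exceed the supremum over the full collection, we conclude $\Abest(\tilde\scriptg,s) \le \Abest(\scriptg,s)$. There is no genuine obstacle here; the only subtlety to mention is the verification that $|\tilde\phi_j(H)| = |\phi_j(H)|$, which is immediate from the definition of a sub-datum.
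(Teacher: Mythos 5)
Your argument is correct and is exactly the paper's: the paper dismisses this lemma as an immediate consequence of the definition of $\Abest(\cdot,s)$, since any (finite) subgroup of $\tilde G$ is a subgroup of $G$, and your write-up simply spells out the routine verification that $A(H,s)$ is the same quantity in either datum before comparing suprema. No issues.
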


This is an immediate consequence of the definition of $\Abest(\cdot,s)$ since any subgroup of $\tilde G$ is a subgroup of $G$.
\qed

\section{Factorization}


Let $\scriptg=(\phi_j:G\to G_j)$ be an HBL datum. 
To any subgroup $G'\le G$  is associated the HBL sub-datum
$\scriptg' = (\phi_j:G'\to \phi_j(G'))$,
where the restriction of $\phi_j$ to $G'$ is denoted again by $\phi_j$.
There is also an associated HBL quotient datum,
\begin{equation*} \scriptg/\scriptg'
=(G/G',\set{G_j/\phi_j(G')},\set{\psi_j})\end{equation*}
where $\psi_j:G/G'\to G_j/\phi_j(G')$ is the homomorphism associated to $\phi_j$ via the quotient mappings
$G\to G/G'$ and $G_j\to G_j/\phi_j(G')$.

\begin{lemma} \label{lemma:Bfactorize} Let $\scriptg$ be an HBL datum, and let $s\in[0,1]^m$.
Let $G'\le G$ be any subgroup, and let $\scriptg'$ be the HBL sub-datum associated to $G'$.  Then 
\begin{equation*} \Bbest(\scriptg,s)\le \Bbest(\scriptg/\scriptg',s) \cdot\Bbest(\scriptg',s).\end{equation*}
\end{lemma}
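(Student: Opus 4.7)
The plan is to decompose the sum over $G$ along cosets of $G'$, apply the inequality for the sub-datum $\scriptg'$ to the inner sums, and then apply the inequality for the quotient datum $\scriptg/\scriptg'$ to the outer sum.

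First I would fix a set $Y\subset G$ of representatives for $G/G'$, so that every $x\in G$ admits a unique decomposition $x=\tilde x+y$ with $\tilde x\in Y$ and $y\in G'$. Then
\begin{equation*}
\sum_{x\in G}\prod_j f_j(\phi_j(x))
=\sum_{\tilde x\in Y}\sum_{y\in G'}\prod_j f_j(\phi_j(\tilde x)+\phi_j(y)).
\end{equation*}
For each $\tilde x$, define $g_j^{\tilde x}:\phi_j(G')\to[0,\infty)$ by $g_j^{\tilde x}(w)=f_j(\phi_j(\tilde x)+w)$. The inner sum is exactly $\sum_{y\in G'}\prod_j g_j^{\tilde x}(\phi_j(y))$, so the inequality for $\scriptg'=(\phi_j:G'\to\phi_j(G'))$ gives
\begin{equation*}
\sum_{y\in G'}\prod_j g_j^{\tilde x}(\phi_j(y))\le \Bbest(\scriptg',s)\prod_j\norm{g_j^{\tilde x}}_{1/s_j}.
\end{equation*}

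Next I would observe that $\norm{g_j^{\tilde x}}_{1/s_j}$ depends on $\tilde x$ only through the coset $\phi_j(\tilde x)+\phi_j(G')=\psi_j(\bar x)\in G_j/\phi_j(G')$, since
\begin{equation*}
\norm{g_j^{\tilde x}}_{1/s_j}^{1/s_j}=\sum_{w\in\phi_j(G')} f_j(\phi_j(\tilde x)+w)^{1/s_j}
\end{equation*}
is a sum over an entire coset. (When $s_j=0$, interpret this as the supremum; the same coset-invariance applies.) Thus I can define $F_j:G_j/\phi_j(G')\to[0,\infty)$ by $F_j(\psi_j(\bar x))=\norm{g_j^{\tilde x}}_{1/s_j}$, giving
\begin{equation*}
\sum_{x\in G}\prod_j f_j(\phi_j(x))\le \Bbest(\scriptg',s)\sum_{\bar x\in G/G'}\prod_j F_j(\psi_j(\bar x)).
\end{equation*}
Applying the inequality for $\scriptg/\scriptg'=(\psi_j:G/G'\to G_j/\phi_j(G'))$ to the right-hand side bounds it by $\Bbest(\scriptg',s)\,\Bbest(\scriptg/\scriptg',s)\prod_j\norm{F_j}_{1/s_j}$.

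Finally, the key identity $\norm{F_j}_{L^{1/s_j}(G_j/\phi_j(G'))}=\norm{f_j}_{L^{1/s_j}(G_j)}$ holds because
\begin{equation*}
\norm{F_j}_{1/s_j}^{1/s_j}=\sum_{\bar w\in G_j/\phi_j(G')}\sum_{w\in\phi_j(G')}f_j(\tilde w+w)^{1/s_j}=\sum_{z\in G_j}f_j(z)^{1/s_j}=\norm{f_j}_{1/s_j}^{1/s_j},
\end{equation*}
since the cosets of $\phi_j(G')$ partition $G_j$. The $s_j=0$ case is the analogous supremum identity. Combining these steps yields the desired bound. The argument is essentially bookkeeping; the only real subtlety is verifying coset-invariance in the definition of $F_j$ and the norm-preservation in the last step, both of which reduce to the fact that $L^p$ norms split over a partition into cosets. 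I expect no serious obstacle beyond being careful with the convention $1/0=\infty$ and with choices of representatives.
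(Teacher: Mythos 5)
Your argument is correct and is essentially the same as the paper's: the same coset decomposition of the sum over $G$, the same intermediate functions ($F_j(\psi_j(\bar x))=\norm{g_j^{\tilde x}}_{1/s_j}$ is exactly the paper's $F_j(x+G'_j)=\bigl(\sum_{y\in G'_j}|f_j(x+y)|^{1/s_j}\bigr)^{s_j}$), and the same norm-preservation identity. The only cosmetic difference is that you handle $s_j=0$ via the supremum convention, whereas the paper majorizes those $f_j$ by $\norm{f_j}_\infty$ and drops the corresponding indices at the outset; both work.
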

This lemma is valid for arbitrary Abelian group HBL data, with no hypothesis of finiteness or finite generation. 

\begin{proof}
Let $G'_j=\phi_j(G')$ and 
let $\psi_j:G/G'\to G_j/G'_j$ be the homomorphisms associated to $\phi_j$ via the quotient maps.
Let $f_j\in L^{1/s_j}(G_j)$ be nonnegative functions with finite norms.
We may suppose without loss of generality that $s_j>0$ for all $j$,
by majorizing $f_j(x)$ by $\norm{f_j}_{L^\infty}$ for all $j$ for which $s_j=0$, and then dropping those indices.

For $x+G'_j\in G_j/G'_j$ define 
\[F_j(x+G'_j) = \big(\sum_{y\in G'_j} |f_j(x+y)|^{1/s_j}\big)^{s_j}.\]
Then \[\norm{F_j}_{L^{1/s_j}(G_j/G'_j)} = \norm{f_j}_{L^{1/s_j}(G_j)}.\]
By definition of $\Bbest(\scriptg/\scriptg',s)$,
\[\sum_{x\in G/G'} \prod_j F_j(\psi_j(x)) 
\le \Bbest(\scriptg/\scriptg',s)\prod_j \norm{F_j}_{L^{1/s_j}(\psi_j(G_j/G'_j))}.\]

Let $T\subset G$ be a subset which contains exactly one element from each coset $x+G'$. 
Then for any nonnegative functions $f_j$,
\begin{align*}
\sum_{x\in G} \prod_j f_j(\phi_j(x))
&= \sum_{t\in T} \sum_{y\in G'} \prod_j f_j(\phi_j(t+y)) 
\\&= \sum_{t\in T} \sum_{y\in G'} \prod_j f_j(\phi_j(t) + \phi_j(t)) 
\\&\le  \sum_{t\in T} \Bbest(\scriptg',s) \prod_j F_j(\phi_j(t)+G'_j)
\\&\le \Bbest(\scriptg/\scriptg',s) \cdot\Bbest(\scriptg',s)  \prod_j \norm{F_j}_{1/s_j}
\\&=   \Bbest(\scriptg/\scriptg',s) \cdot\Bbest(\scriptg',s)\prod_j \norm{f}_{1/s_j}.
\end{align*}
\end{proof}

\begin{lemma} \label{lemma:Afactorize}
Let $\scriptg$ be a finite Abelian group HBL datum, and let $s\in[0,1]^m$. Let $G'\le G$ be a subgroup,
and let $\scriptg'$ be the associated HBL sub-datum.  If $A(G',s) = \Abest(\scriptg',s)$ then
\begin{equation} \label{Afactorization}
\Abest(\scriptg/\scriptg',s)\Abest(\scriptg',s) \le \Abest(\scriptg,s).  \end{equation}
\end{lemma}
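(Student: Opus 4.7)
The plan is to produce a single finite subgroup $H\le G$ witnessing the claimed lower bound on $\Abest(\scriptg,s)$. Since $G$ is finite, so is the quotient $G/G'$, and the supremum defining $\Abest(\scriptg/\scriptg',s)$ is attained at some subgroup $\bar H\le G/G'$. I will take $H$ to be the preimage of $\bar H$ under the quotient map $\pi:G\to G/G'$, so that $G'\le H\le G$ and $H/G'=\bar H$.

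The computational heart of the proof is the multiplicativity
\begin{equation*}
A(H,s)=A(\bar H,s)\cdot A(G',s).
\end{equation*}
For the numerator this is just $|H|=|\bar H|\cdot|G'|$. For the $j$-th factor of the denominator I would show that $|\phi_j(H)|=|\psi_j(\bar H)|\cdot|G'_j|$, where $G'_j=\phi_j(G')$ and $\psi_j:G/G'\to G_j/G'_j$ is the quotient-induced homomorphism. This identity follows by noting that $\phi_j(H)\supseteq G'_j$ and that $\phi_j(H)/G'_j$ coincides with $\psi_j(\bar H)$: indeed, an element of $\phi_j(H)/G'_j$ has the form $\phi_j(h)+G'_j$ with $h\in H$, and by the definition of $\psi_j$ this equals $\psi_j(h+G')=\psi_j(\pi(h))$, which ranges exactly over $\psi_j(\bar H)$ as $h$ ranges over $H$. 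Raising to the power $-s_j$ and multiplying over $j$ yields the displayed factorization.

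With the factorization in hand, the hypothesis $A(G',s)=\Abest(\scriptg',s)$ and the choice of $\bar H$ give
\begin{equation*}
\Abest(\scriptg,s)\ge A(H,s)=A(\bar H,s)\cdot A(G',s)=\Abest(\scriptg/\scriptg',s)\cdot\Abest(\scriptg',s),
\end{equation*}
which is the desired inequality. The only subtle point, which I view as the main thing to verify carefully rather than a genuine obstacle, is the identity $|\phi_j(H)|=|\psi_j(\bar H)|\cdot|G'_j|$; everything else is immediate from finiteness and the definitions. Note that the hypothesis $A(G',s)=\Abest(\scriptg',s)$ is used only in the very last step, so without it one still obtains the weaker bound $\Abest(\scriptg,s)\ge\Abest(\scriptg/\scriptg',s)\cdot A(G',s)$ for every subgroup $G'\le G$.
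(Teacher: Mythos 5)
Your proof is correct and follows essentially the same route as the paper: the paper likewise chooses a subgroup $\Gamma\le G/G'$ attaining $\Abest(\scriptg/\scriptg',s)$, lifts it to a subgroup $H'\le G$, and uses the identity $|\phi_j(H')|=|\psi_j(\Gamma)|\cdot|\phi_j(G')|$ (which it labels ``elementary group theory'' and you verify explicitly via $\phi_j(H)/G'_j=\psi_j(\bar H)$). The only difference is cosmetic: you take $H$ to be the full preimage of $\bar H$, which is the cleanest way to justify $|H|=|\bar H|\,|G'|$, a point the paper leaves implicit.
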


\begin{proof}
Choose a subgroup $\Gamma\le G/G'$ satisfying 
$A(\Gamma,s) = \Abest(\scriptg/\scriptg',s)$.
Choose a subgroup $H'\le G$ whose image under the quotient map from $G$ to $G/G'$
equals $\Gamma$. For each index $j$, 
\[|\phi_j(H')| = |\psi_j(\Gamma)|\cdot |\phi_j(G')|\]
by elementary group theory.
Consequently
\begin{align*}
A(H',s) &=|H'|\prod_j |\phi_j(H')|^{-s_j}
\\&= |H'|\prod_j |\psi_j(\Gamma)|^{-s_j} |\phi_j(G')|^{-s_j}
\\&= |\Gamma|\prod_j |\psi_j(\Gamma)|^{-s_j}\cdot |G'|\prod_j |\phi_j(G')|^{-s_j}.
\\&= A(\Gamma,s)A(G',s)
\\&= \Abest(\scriptg/\scriptg',s)\cdot\Abest(\scriptg',s).
\end{align*}
Since $\Abest(\scriptg,s)\ge A(H',s)$ by definition,
this establishes \eqref{Afactorization}.
\end{proof}


Factorization is a fundamental tool in \cite{BCCT10}. In that work, one factors with respect
to vector subspaces or subgroups which have an extremal property called criticality. The hypothesis
that $A(G',s)=\Abest(\scriptg',s)$ in Lemma~\ref{lemma:Afactorize} is an analogue of criticality.
Lemma~\ref{lemma:Bfactorize}, in contradistinction, has no corresponding hypothesis, and yields an inequality
in the opposite direction.

\section{Equality in a fundamental case} 

The special case in which all exponents belong to $\set{0,1}$, with at most one 
exception, is fundamental to the analysis.
\begin{lemma} \label{lemma:icase2}
Let $\scriptg$ be a finite Abelian group HBL datum, and let $s\in[0,1]^m$.
If $s_j\in\set{0,1}$ for all but at most one index $j$ then
\begin{equation} \Abest(\scriptg,s) = \Bbest(\scriptg,s).\end{equation}
\end{lemma}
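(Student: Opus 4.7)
Since $\Bbest(\scriptg,s)\ge\Abest(\scriptg,s)$ has already been established, it suffices to prove the reverse inequality. The plan is to reduce to the two sharp evaluations already in hand --- Lemma~\ref{lemma:injectivecase} and Lemma~\ref{lemma:icase} --- by factorizing over the canonical subgroup
\[K = \bigcap_{k:\, s_k=1} \kernel(\phi_k),\]
with the convention $K=G$ if no $s_k$ equals $1$.

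Applying Lemma~\ref{lemma:Bfactorize} with $G'=K$ yields
\[\Bbest(\scriptg,s) \le \Bbest(\scriptg/\scriptg',s)\cdot \Bbest(\scriptg',s),\]
where $\scriptg'$ denotes the sub-datum on $K$. The first factor equals $1$ by Lemma~\ref{lemma:injectivecase}: for indices with $s_k=1$, $\phi_k(K)=\zero$ so the quotient map $\psi_k: G/K\to G_k$ has kernel $\kernel(\phi_k)/K$, and these kernels intersect in $K/K=\zero$ by the very definition of $K$. For the second factor, the fact that $\phi_k(K)=\zero$ when $s_k=1$ means that $f_k(\phi_k(x))$ on $K$ is the constant $f_k(0)\le\norm{f_k}_{1/s_k}$, which can be pulled outside the sum. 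What remains is a sum over $x\in K$ of $\prod_{j:\, s_j\ne 1}f_j(\phi_j(x))$, with at most one nonzero exponent among the remaining indices (the possibly-exceptional index $i$). Lemma~\ref{lemma:icase} applied to this restricted datum (or the trivial $L^\infty$ bound if no such $i$ exists) produces the constant $|K|^{1-s_i}|K\cap\kernel(\phi_i)|^{s_i}$, which by $|\phi_i(K)|=|K|/|K\cap\kernel(\phi_i)|$ equals $|K|\cdot|\phi_i(K)|^{-s_i} = |K|\prod_j |\phi_j(K)|^{-s_j} = A(K,s)$. Hence $\Bbest(\scriptg',s)\le A(K,s)\le\Abest(\scriptg,s)$.

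Combining the two factors gives $\Bbest(\scriptg,s)\le\Abest(\scriptg,s)$, as required. The one conceptually nontrivial step is the choice of factorization subgroup $K$; it is engineered so that both the injectivity hypothesis of Lemma~\ref{lemma:injectivecase} on the quotient datum and the at-most-one-nonzero-exponent hypothesis of Lemma~\ref{lemma:icase} on the sub-datum are enforced automatically. Everything else is bookkeeping with the formulas already provided by those two lemmas.
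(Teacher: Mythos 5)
Your proof is correct and follows essentially the same route as the paper: factorize over $K=\bigcap_{k:\,s_k=1}\kernel(\phi_k)$ via Lemma~\ref{lemma:Bfactorize}, dispose of the quotient datum with Lemma~\ref{lemma:injectivecase}, and evaluate the sub-datum on $K$ with Lemma~\ref{lemma:icase}. Your only (harmless) shortcut is to bound the sub-datum's constant directly by $A(K,s)\le\Abest(\scriptg,s)$, where the paper instead routes this step through the criticality condition and the $\Abest$-factorization of Lemma~\ref{lemma:Afactorize}.
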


\begin{proof}
Choose an index $i$ such that $s_j\in\set{0,1}$ for all $j\ne i$.
Let $G'$ be the intersection of $\kernel(\phi_k)$, taken over all $k\ne i$ satisfying $s_k=1$.
Let $\scriptg'$ be the finite Abelian group HBL datum $\scriptg'=(\phi_j|_{G'}: G'\to\phi_j(G'))$.
Let $\scriptg/\scriptg'$ be the quotient datum.

Define $t$ by $t_i=s_i$, and $t_j=0$ for all $j\ne i$.
Then $\Abest(\scriptg',t)=\Abest(\scriptg',s)$. Indeed, for each $j\ne i$, for each subgroup $H\le G'$,
$|\phi_j(H)|^{-s_j}=|\phi_j(H)|^{-t_j}$. If $j=i$ of if $s_j=0$ this holds because $t_j=s_j$.
If $s_j=1$ then $|\phi_j(H)|=1$ since $H\le G'\le\kernel(\phi_j)$, so again $|\phi_j(H)|^{-s_j}=|\phi_j(H)|^{-t_j}$.

Likewise $\Bbest(\scriptg',t) = \Bbest(\scriptg',s)$. 
Indeed, letting $K$ be the set of all indices $k\ne i$ such that $s_k=1$,
$\phi_k(x)=0$ for all $x\in G$ and all $k\in K$ and consequently
\begin{equation*}
\sum_{x\in G'} \prod_j f_j(\phi_j(x)) = \prod_{k\in K}f_k(\phi_k(0)) \prod_{j\notin K} f_j(\phi_j(x)).
\end{equation*}
Thus $\Bbest((\phi_j: G'\to G_j),s) = \Bbest((\phi_j: G'\to G_j: j\notin K),s)$
and likewise $\Bbest((\phi_j: G'\to G_j),t) = \Bbest((\phi_j: G'\to G_j: j\notin K),t)$.
Since $s_j=t_j$ for all $j\notin K$, it follows that $\Bbest(\scriptg',t) = \Bbest(\scriptg',s)$. 

$\Abest(\scriptg',t)=\Bbest(\scriptg',t)$ by Lemma~\ref{lemma:icase}.
Therefore $\Abest(\scriptg',s)=\Bbest(\scriptg',s)$.

We also conclude from Lemma~\ref{lemma:icase} that 
\begin{equation}\label{needthistofactor} 
\Abest(\scriptg',s) = |G'|^{1-s_i}|\kernel(\phi_i|_G')|^{s_i} = A(G',s).\end{equation}

Now consider $\scriptg/\scriptg'$. Since $G'=\cap_{k\in K} \kernel(\phi_k)$,
the intersection over all $k\in K$ of the kernels of the quotient mappings $[\phi_k]: G/G'\to G_k/\phi_k(G')$
is $\set{0}$. Therefore 
\begin{equation*} A(H,s) = |H|\prod_{j}|[\phi_j](H)|^{-s_j} \le  |H|\prod_{k\in K}|[\phi_k](H)|^{-s_k} \le 1\end{equation*}
for all $H\le G/G'$.  Thus $\Abest(\scriptg/\scriptg/,s)=1$.

Define $t_i=0$ and $t_j=s_j$ for all $j\ne i$.
According to Lemma~\ref{lemma:Bfactorize}, $\Bbest(\scriptg/\scriptg',t)=1$.
Since $s_j\ge t_j$ for all $j$, $\Bbest(\scriptg/\scriptg',s) \le \Bbest(\scriptg/\scriptg',t)=1$.
Since $\Bbest(\scriptg/\scriptg',s)\ge 1$ for arbitrary data, we conclude that $\Bbest(\scriptg/\scriptg',s)= 1$.
So $\Bbest(\scriptg/\scriptg',s) = \Abest(\scriptg/\scriptg',s)$.

Since $G'$ satisfies $A(G',s)=\Abest(\scriptg',s)$ by \eqref{needthistofactor},
Lemma~\ref{lemma:Afactorize} can be applied to conclude that 
$\Abest(\scriptg,s) \ge \Abest(\scriptg',s)\Abest(\scriptg/\scriptg',s)$.
Therefore an invocation of first Lemma~\ref{lemma:Bfactorize}, then the equalities shown above,
then Lemma~\ref{lemma:Afactorize} yields
\begin{equation*} \scriptb(\scriptg,s) \le \scriptb(\scriptg',s)\scriptb(\scriptg/\scriptg',s)
= \scripta(\scriptg',s)\scripta(\scriptg/\scriptg',s) \le \scripta(\scriptg,s).  \end{equation*}
Since the converse inequality holds for all HBL data, we conclude that
$\scriptb(\scriptg,s) = \scripta(\scriptg,s)$. 
\end{proof}

\section{Conclusion of proof for finite groups}

Let $\scriptg=(\phi_j:G\to G_j)$ be a finite Abelian group HBL datum.
To complete the proof of Theorem~\ref{thm:finitegroups} we argue by induction on the cardinality of $G$.

\begin{lemma} \label{lemma:splitting} 
Let $\scriptg$ be a finite Abelian group HBL datum, and let $s\in[0,1]^m$. 
Let $0<G'<G$ be a subgroup of $G$, and let $\scriptg'$ be the associated Abelian group HBL sub-datum.  If 
\begin{equation}  \label{splittingcase} A(G',s) = \Abest(\scriptg',s) \end{equation}
then $\Bbest(\scriptg,s)=\Abest(\scriptg,s)$.
\end{lemma}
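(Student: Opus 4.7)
The plan is to prove this by strong induction on $|G|$, interleaved with the induction used to prove Theorem~\ref{thm:finitegroups} itself. The hypothesis $\zero < G' < G$ means $G'$ is a proper nontrivial subgroup, so $1 < |G'| < |G|$ and consequently $|G/G'| = |G|/|G'| < |G|$ as well. Thus both $\scriptg'$ and $\scriptg/\scriptg'$ are finite Abelian group HBL data whose underlying groups are strictly smaller than $G$, so the inductive hypothesis for Theorem~\ref{thm:finitegroups} applies to each of them.

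Concretely, I would assume inductively that $\Bbest(\tilde\scriptg,s)=\Abest(\tilde\scriptg,s)$ for every finite Abelian group HBL datum $\tilde\scriptg$ whose underlying group has strictly smaller cardinality than $G$. Applying this to $\scriptg'$ and to $\scriptg/\scriptg'$ yields
\begin{equation*}
\Bbest(\scriptg',s)=\Abest(\scriptg',s)
\qquad\text{and}\qquad
\Bbest(\scriptg/\scriptg',s)=\Abest(\scriptg/\scriptg',s).
\end{equation*}

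Now I would chain together the three factorization statements proved above. First, Lemma~\ref{lemma:Bfactorize} gives the submultiplicativity
\begin{equation*}
\Bbest(\scriptg,s)\le \Bbest(\scriptg/\scriptg',s)\cdot \Bbest(\scriptg',s).
\end{equation*}
Substituting the two equalities from the inductive hypothesis converts the right-hand side to $\Abest(\scriptg/\scriptg',s)\cdot \Abest(\scriptg',s)$. Next, the hypothesis \eqref{splittingcase} that $A(G',s)=\Abest(\scriptg',s)$ is exactly what is needed to invoke Lemma~\ref{lemma:Afactorize}, yielding
\begin{equation*}
\Abest(\scriptg/\scriptg',s)\cdot \Abest(\scriptg',s)\le \Abest(\scriptg,s).
\end{equation*}
Combining these three inequalities gives $\Bbest(\scriptg,s)\le \Abest(\scriptg,s)$, while the reverse inequality $\Bbest(\scriptg,s)\ge \Abest(\scriptg,s)$ was established as a direct corollary of the definitions in Section~2.

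There is no real obstacle beyond verifying that the induction is well-founded, which comes down to the strict inequalities $|G'|<|G|$ and $|G/G'|<|G|$ guaranteed by $\zero<G'<G$. The lemma itself is essentially just the observation that the submultiplicative bound on $\Bbest$ from Lemma~\ref{lemma:Bfactorize} and the supermultiplicative bound on $\Abest$ from Lemma~\ref{lemma:Afactorize} pinch $\Bbest(\scriptg,s)$ between $\Abest(\scriptg,s)$ on both sides, once the inductive hypothesis has upgraded both factor terms from inequalities to equalities. The real work — separating out the cases where no such proper subgroup $G'$ with $A(G',s)=\Abest(\scriptg',s)$ exists — will have to happen elsewhere in Section~5 and is not part of this particular lemma.
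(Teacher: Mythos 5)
Your proposal is correct and follows exactly the paper's argument: induction on $|G|$ applied to both $\scriptg'$ and $\scriptg/\scriptg'$, then chaining Lemma~\ref{lemma:Bfactorize}, the two inductive equalities, and Lemma~\ref{lemma:Afactorize} (whose hypothesis is supplied by \eqref{splittingcase}) to pinch $\Bbest(\scriptg,s)$ against $\Abest(\scriptg,s)$, with the reverse inequality coming from the definitions. Your explicit remark on the well-foundedness of the induction is a minor point the paper leaves implicit.
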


\begin{proof}
Consider $\scriptg' = (\phi_j:G'\to \phi_j(G'))$,
where the restriction of $\phi_j$ to $G'$ is denoted again by $\phi_j$.
By induction on $|G|$, $\Abest(\scriptg',s) = \Bbest(\scriptg',s)$
and $\Abest(\scriptg/\scriptg',s)=\Bbest(\scriptg/\scriptg',s)$.
By the preceding lemmas,
\begin{align*}
\Bbest(\scriptg,s) 
\le \Bbest(\scriptg/\scriptg',s)\Bbest(\scriptg',s)
= \Abest(\scriptg/\scriptg',s)\Abest(\scriptg',s)
\le\Abest(\scriptg,s).
\end{align*}
On the other hand, it has already been noted that $\Abest(\scriptg,s)\le\Bbest(\scriptg,s)$,
as a direct consequence of their definitions.
\end{proof}

\begin{lemma} \label{lemma:secondsplitting} 
Let $\scriptg$ be a finite Abelian group HBL datum, and let $s\in[0,1]^m$. 
If there exists a subgroup $\zero<H<G$ satisfying $A(H,s)\ge 1$
then $\Bbest(\scriptg,s)=\Abest(\scriptg,s)$.  \end{lemma}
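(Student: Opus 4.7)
The plan is to reduce this lemma to the splitting lemma (Lemma~\ref{lemma:splitting}) by exhibiting a proper nontrivial subgroup $G^*\le G$ that satisfies the criticality hypothesis $A(G^*,s)=\Abest(\scriptg^*,s)$ required there. The whole game is the right extremal choice of $G^*$.

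First, I would introduce the family $\scripts=\{H'\le G:\zero<H'<G\}$ of proper nontrivial subgroups of $G$. By hypothesis $H\in\scripts$, so $\scripts$ is nonempty, and since $G$ is finite the map $H'\mapsto A(H',s)$ attains its maximum over $\scripts$ at some $G^*\in\scripts$. By construction $A(G^*,s)\ge A(H,s)\ge 1$, which is the one point where the hypothesis $A(H,s)\ge 1$ is actually used.

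Second, I would check that $G^*$ satisfies $A(G^*,s)=\Abest(\scriptg^*,s)$, where $\scriptg^*$ is the sub-datum associated to $G^*$. Any subgroup $H'\le G^*$ falls into one of two cases: either $H'=\zero$, in which case $A(H',s)=1\le A(G^*,s)$; or $H'>\zero$, in which case $H'\le G^*<G$ forces $H'<G$ and hence $H'\in\scripts$, so $A(H',s)\le A(G^*,s)$ by extremality of $G^*$. Taking the supremum over $H'\le G^*$ gives $\Abest(\scriptg^*,s)=A(G^*,s)$. This is precisely the reason the hypothesis $A(H,s)\ge 1$ is needed: without it the maximizer $G^*$ might satisfy $A(G^*,s)<1$, and then the trivial subgroup $\zero\le G^*$ would contribute $A(\zero,s)=1>A(G^*,s)$, breaking the criticality identity.

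Third, since $\zero<G^*<G$ and $A(G^*,s)=\Abest(\scriptg^*,s)$, Lemma~\ref{lemma:splitting} applies and yields $\Bbest(\scriptg,s)=\Abest(\scriptg,s)$.

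No step looks delicate; the argument is essentially a finite extremal selection followed by invocation of the splitting lemma. If there is an obstacle, it is purely bookkeeping: one must check in the criticality verification that the inclusion $G^*<G$ is strict (so that every nontrivial subgroup of $G^*$ actually lies in $\scripts$), which is guaranteed by taking $G^*$ from $\scripts$ in the first place.
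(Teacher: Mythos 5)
Your proof is correct and follows essentially the same route as the paper: select a proper nontrivial subgroup maximizing $A(\cdot,s)$, verify the criticality condition \eqref{splittingcase} using $A(H,s)\ge 1$ to handle the trivial subgroup, and invoke Lemma~\ref{lemma:splitting}. The only (immaterial) difference is that the paper further selects a minimal element among the maximizers, which your argument correctly shows is unnecessary.
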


\begin{proof}
In this case, there exists a subgroup $\zero<G'<G$ satisfying \eqref{splittingcase}.
Indeed, the set $\scripts$
of all subgroups $\zero<H<G$ 
for which $A(H,s)$ is maximal among all such subgroups,
is a partially ordered set under inclusion, and it is given that this
set is nonempty. It is finite since $G$ is a finite group, and therefore has at least one minimal element.

Choose $G'$ to be such a minimal element. Then $G'$ is a proper nonzero subgroup  of $G$
which satisfies \eqref{splittingcase}.  By Lemma~\ref{lemma:splitting}, $\Bbest(\scriptg,s)=\Abest(\scriptg,s)$.
\end{proof}

We may assume that $|G|>1$, since the conclusion holds when $G=\zero$.
Two cases remain to be treated. In the first of these cases,
$A(G,s)=\Abest(\scriptg,s)$ and $A(G',s)<\Abest(\scriptg,s)$ for all $\zero<G'<G$.
In the second, $A(G',s)<1$ for every nonzero subgroup $G'\le G$, including $G$ itself. 
Consequently $\Abest(\scriptg,s)=1$.

Consider the second case.  Let a parameter $\theta$ vary over $[0,1]$.
$\Abest(\scriptg,\theta s)$ is a continuous nonincreasing function of $\theta$.
For $\theta=1$, it is given that $A(H,\theta s)<1$ for every subgroup $0<H\le G$.
For $\theta=0$, $A(G,\theta s)=|G|$. 
Since $A(G,0s)=|G|>1=\Abest(\scriptg,s)$, there exists a smallest $\theta^\star\in(0,1)$
for which $\Abest(\scriptg,\theta^\star s)=1$ and $A(H,\theta^\star s)=1$
for some subgroup satisfying $\zero<H\le G$.
Since $\Bbest(\scriptg,s)\le\Bbest(\scriptg,\theta^\star s)$, 
it suffices to prove
that $\Bbest(\scriptg,\theta^\star s)\le \Abest(\scriptg,\theta^\star s)$,
since the latter is equal to $1=\Abest(\scriptg,s)$.
If $H<G$ then Lemma~\ref{lemma:secondsplitting} applies and gives the desired conclusion.
If $H=G$ then matters have been reduced to the first of the two cases described above.
So it suffices to treat that first case.

For each $\Abestest\in[1,\infty)$ define \begin{equation} \scriptp_\Abestest
=\set{ s\in[0,1]^m: \Abest(\scriptg,s)\le\Abestest.}.\end{equation}
$\scriptp_\Abestest$ is a closed convex polytope. 
An equivalent way to state the inequality \eqref{BleA}
is that for each $\Abestest\in[1,\infty)$, 
$\Bbest(\scriptg,t)\le\Abestest$ for all $t\in\scriptp_{\Abestest}$.
Moreover, it suffices to prove this for those $\Abestest$ for which
there exists at least one $s\in[0,1]^m$ for which $\Abest(\scriptg,s)=\Abestest$.
By complex interpolation, it suffices to prove 
that $\Bbest(\scriptg,t)\le \Abestest$ for each extreme point $t$ of $\scriptp_\Abestest$.
In view of the various reductions made above, it suffices to analyze
those extreme points which fall into the first case described above.

\begin{lemma} \label{lemma:extremepts}
Let $\scriptg$ be a finite Abelian group HBL datum. 
Let $t=(t_j: 1\le j\le m)\in[0,1]^m$ be an extreme point of $\scriptp_\Abestest$
such that $A(G,t)\ge 1$, and $A(G',t)<A(G,t)$ for every subgroup $\zero<G'<G$.
Then either there exists a subgroup $\zero<H<G$ for which $A(H,t)=\Abestest$,
or $t_j\in\set{0,1}$ for all but at most one index $j$.
\end{lemma}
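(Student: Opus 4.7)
The plan is to exploit the extreme-point hypothesis on $t$ and carefully enumerate which of the constraints defining $\scriptp_\Abestest$ can be tight at $t$. Because $G$ is finite it has only finitely many subgroups, so $\scriptp_\Abestest$ is genuinely a polytope in $\reals^m$, cut out by the box constraints $0\le s_j\le 1$ together with the subgroup constraints
\[ \sum_{j=1}^m s_j \log|\phi_j(H)| \ge \log|H| - \log\Abestest, \]
one for each finite subgroup $H\le G$; these are linear in $s$. At any extreme point $t$ of a polytope in $\reals^m$, the normals of the constraints that are tight at $t$ must span $\reals^m$.

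The next step is to identify which constraints can actually be tight at $t$. Under the hypothesis, for every proper nonzero subgroup $\zero<G'<G$ one has $A(G',t)<A(G,t)\le\Abestest$ with strict inequality, so no such $G'$ contributes a tight constraint. In particular, the first alternative appearing in the conclusion cannot actually occur under the stated hypothesis: the content of the lemma is really the second alternative. The only candidates for a tight nontrivial subgroup constraint are therefore $H=G$, whose normal is $-(\log|\phi_j(G)|)_{j=1}^m$, and $H=\zero$, whose constraint reduces to $0\ge-\log\Abestest$ with zero normal and so contributes nothing to the span.

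Now set $J=\{j:t_j\in(0,1)\}$. The active box-constraint normals $\{\pm e_j:j\notin J\}$ span exactly the coordinate subspace $\Span\{e_j:j\notin J\}$, of dimension $m-|J|$. Adjoining the at most one additional normal contributed by $H=G$, the span of the full collection of active normals has dimension at most $(m-|J|)+1$. Extremality of $t$ forces this span to equal $\reals^m$, so $|J|\le 1$, which is precisely the statement that $t_j\in\{0,1\}$ for all but at most one index $j$.

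The only real subtlety is the bookkeeping: one must correctly enumerate the tight constraints, use the strict inequality hypothesis to rule out all tight subgroup constraints except possibly $H=G$, and treat the trivial $H=\zero$ constraint as contributing nothing to the span because its normal is zero. Once that is done, the conclusion is immediate from a one-line dimension count on the active constraint normals at an extreme point.
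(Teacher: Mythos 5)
Your proof is correct and follows essentially the same route as the paper: you identify that under the hypotheses the only possibly active constraints at $t$ are the box constraints and the single constraint coming from $H=G$ (the $H=\zero$ constraint being vacuous and every $\zero<G'<G$ giving strict inequality since $A(G',t)<A(G,t)\le\Abestest$), and then invoke extremality. The paper carries this out as an explicit perturbation argument split into the cases $A(G,t)<\Abestest$ and $A(G,t)=\Abestest$, whereas you package both cases at once via the standard rank criterion for vertices of a polyhedron; your side remark that the first alternative of the conclusion cannot actually occur under the stated hypothesis is also accurate.
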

These possibilities are not mutually exclusive.

\begin{proof}
If $A(G,t)<\Abestest$ then $A(H,t)\le A(G,t)<\Abestest$ for every
subgroup $H\le G$. Now $t$ cannot be an extreme point unless $t_j\in\set{0,1}$ for
every index $j$, for otherwise it would be possible to freely vary at least one
coordinate $t_j$ in either direction up to some small threshold without leaving $\scriptp_\Abestest$,
contradicting extremality.

Otherwise $A(G,t)=\Abestest$ and $A(H,t)<\Abestest$ whenever $\zero<H<G$.
Then in some sufficiently small neighborhood of $t$, $\scriptp_\Abestest$ coincides with the
set of all $s\in[0,1]^m$ satisfying 
\begin{equation} \label{thatsall} \ln|G|-\sum_j s_j\ln|\phi_j(G)|  
\le \ln\Abestest;\end{equation}
the subgroup $\zero$ imposes no constraint on $s$, and constraints imposed by
all other proper subgroups are satisfied for all $s$ sufficiently close to $t$,
by continuity, since they hold with strict inequality for $s=t$.

If there were to exist two indices $i,j\in\set{1,2,\cdots,m}$
such that neither $t_i,t_j$ belonged to $\set{0,1}$,
then $t$ would lie in the interior of a line segment of points satisfying \eqref{thatsall}.
This segment would be contained in $\scriptp_\Abestest$ in some neighborhood of $t$,
again contradicting the assumed extremality of $t$.
\end{proof}

We are now in a position to complete the proof that for any finite Abelian group HBL datum
$\scriptg$ and every $\Abestest\in[1,\infty)$, $\Bbest(\scriptg,t)\le\Abestest$
for every extreme point $t$ of $\scriptp_{\Abestest}$. 
As shown above, it suffices to treat the case in which $t_j\in\set{0,1}$ for all but at most one index $j$.
In that case, $\Bbest(\scriptg,t)=\Abest(\scriptg,t)\le\Abestest$ by Lemma~\ref{lemma:icase}.

This in turn completes the proof of Theorem~\ref{thm:finitegroups}. 
\qed

  
\section{Extension to general groups} \label{section:infinitegroups}

\begin{proof}[Proof of Theorem~\ref{thm:generalgroups} for finitely generated groups]
Let $\scriptg=(\phi_j:G\to G_j)$ be an Abelian group HBL datum, and let $s\in[0,1]^m$.
Assume that $G$ is finitely generated.

Let $T\le G$ be the torsion subgroup of $T$, which is a finite group since $G$ is finitely generated.
Let $\scriptt=(\phi_j|_T: T\to \phi_j(T))$ be the associated finite Abelian group HBL datum.
By Theorem~\ref{thm:finitegroups}, $\Bbest(\scriptt,s)=\Abest(\scriptt,s)$.

Consider also the quotient group $G/T$ and the quotient quotient HBL datum $\scriptg/\scriptt$. 
Since $G/T$ is torsion-free, $\Abest(\scriptg/\scriptt,s)=1$. 
By \cite{CDKSY}, $\Bbest(\scriptg/\scriptt,s)=1$ also.

Since every finite subgroup of $G$ is contained in $T$, $\Abest(\scriptt,s) = \Abest(\scriptg,s)$.
By Lemma~\ref{lemma:Bfactorize},
\begin{equation*} \Bbest(\scriptg,s)\le\Bbest(\scriptt,s)\Bbest(\scriptg/\scriptt,s)
= \Abest(\scriptt,s)\cdot 1 = \Abest(\scriptg,s).  \end{equation*}
\end{proof}

\begin{proof}[Proof of Theorem~\ref{thm:generalgroups} in the general case]
Let $\scriptg$ be an Abelian group HBL datum, and let $s\in\scriptp(G)$.
It suffices to prove that
\begin{equation}\label{oncemore} 
\sum_{x\in G} \prod_j f_j(\phi_j(x)) \le \Abest(\scriptg,s) \prod_j\norm{f_j}_{L^{1/s_j}}\end{equation}
for arbitrary nonnegative functions $f_j$ having finite supports, for the same inequality
for general nonnegative $f_j\in L^{1/s_j}(G_j)$ follows from this special case via the Monotone Convergence Theorem.
It suffices to prove this under the hypothesis that $\Abest(\scriptg,s)<\infty$.

Denote by $\prod_j G_j$ the Cartesian product of the sets $G_j$.
By Lemma~\ref{lemma:smallK}, $|\cap_j\kernel(\phi_j)|\le \Abest(\scriptg,s)<\infty$.
Therefore the mapping $\Phi:G\to\prod_j G_j$ defined by
$\Phi(x)=(\phi_j(x): 1\le j\le m)$ is $K$-to-one where $K\le\Abest(\scriptg,s)<\infty$. 

Let $f_j$ be nonnegative and have finite supports. Then the set of all $x\in G$ for which $\prod_j f_j(\phi_j(x))\ne 0$
is the inverse image under $\Phi$ of a finite product of finite sets, so is a finite set.
Let $\tilde G\le G$ be the subgroup of $G$ generated by this set. Consider the
finitely generated Abelian group HBL datum 
$\tilde\scriptg=(\phi_j|_{\tilde G}: \tilde G\to \phi_j(\tilde G): 1\le j\le m)$.
We have already shown that $\Bbest(\tilde\scriptg)=\Abest(\tilde\scriptg)$.
Since $\tilde\scriptg$ is a sub-datum of $\scriptg$, $\Abest(\tilde\scriptg)\le\Abest(\scriptg)$
by Lemma~\ref{lemma:subdata}.  Putting this all together,
\begin{align*}
\sum_{x\in G}\prod_j f_j(\phi_j(x))
&=\sum_{x\in \tilde G}\prod_j f_j(\phi_j(x))
\\&\le \Bbest(\tilde\scriptg,s)\prod_j\norm{f_j}_{L^{1/s_j}}
\\&= \Abest(\tilde\scriptg,s)\prod_j\norm{f_j}_{L^{1/s_j}}
\\&\le \Abest(\scriptg,s)\prod_j\norm{f_j}_{L^{1/s_j}},
\end{align*}
as was to be proved.
\end{proof}

So far we have only proved the first conclusion of Theorem~\ref{thm:generalgroups}. The second, converse, portion
is much simpler.
It is given that $|E|\le C\prod_j|\phi_j(E)|^{s_j}$ for all finite sets $E\subset G$, and that $C<\infty$.
If $H\le G$ is any finite subgroup, applying this with $E=H$ gives $A(H,s)\le C$ and therefore $C\ge \Abest(\scriptg,s)$.

Consider any finitely generated subgroup $\tilde G\subset G$, and let $\tilde\scriptg$ be the associated sub-datum. 
It was proved in \cite{CDKSY} that for finitely generated Abelian group HBL data $\tilde\scriptg$, the hypothesis
that $|E|\le C\prod_j|\phi(E)|^{s_j}$ for all finite subsets $E\subset\tilde G$ implies that $s\in\scriptp(\tilde\scriptg)$.
Therefore $\Rank(\tilde G)\le \sum_j s_j\Rank(\phi_j(\tilde G))$. The validity of this inequality for every
finitely generated subroup of $G$ is the criterion for $s$ to be an element of $\scriptp(\scriptg)$. \qed


\begin{thebibliography}{20}

\bibitem{bartheicm}
F.~Barthe,
{\em The Brunn-Minkowski theorem and related geometric and functional inequalities}, 
International Congress of Mathematicians. Vol. II, 1529--1546, Eur. Math. Soc., Z\"urich, 2006

\bibitem{BCCT08}
J.~Bennett, A.~Carbery, M.~Christ, and T.~Tao,
{\em The Brascamp-Lieb inequalities: finiteness, structure and extremals}, 
Geom. Funct. Anal. 17 (2008), no. 5, 1343--1415

\bibitem{BCCT10}
\bysame,
{\em Finite bounds for H\"older-Brascamp-Lieb} multilinear inequalities,
Mathematical Research Letters 17(4): 647--666, 2010

\bibitem{carlenliebloss}
E.~Carlen, E.~Lieb, and M.~Loss,
{\em A sharp analog of Young's inequality on $S^N$ and related entropy inequalities}, 
J. Geom. Anal. 14 (2004), no. 3, 487--520

\bibitem{CDKSY}
M.~Christ, J.~Demmel, N.~Knight, T.~Scanlon, K.~Yelick,
{\em Communication lower bvounds and optimal algorithms for programs that reference arrays --- Part 1},
preprint

\bibitem{liebgaussian}
E.~Lieb,
{\em Gaussian kernels have only Gaussian maximizers}, 
Invent. Math. 102 (1990), no. 1, 179--208

\end{thebibliography}
\end{document}